\title{Nonlinear Approximation\\with Subsampled Rank-1 Lattices}
\author{\IEEEauthorblockN{Felix~Bartel}
\IEEEauthorblockA{Chemnitz University of Technology\\
felix.bartel@math.tu-chemnitz.de}
\and
\IEEEauthorblockN{Fabian~Taubert}
\IEEEauthorblockA{Chemnitz University of Technology\\
fabian.taubert@math.tu-chemnitz.de}
}
\date{\today}
\newtheorem{theorem}{Theorem}[section]
\newtheorem{lemma}[theorem]{Lemma}
\newtheorem{example}[theorem]{Example}
\DeclareMathOperator*{\spn}{span}
\DeclareMathOperator*{\argmin}{arg\,min}
\DeclareMathOperator*{\sinc}{sinc}
\newcommand{\inv}{^{-1}}
\definecolor{azure}{RGB}{0, 127, 255}
\definecolor{violet}{RGB}{161, 11, 112}
\definecolor{orange}{RGB}{243, 119, 53}
\begin{document} 

\maketitle

\begin{abstract}
    In this paper we approximate high-dimensional functions $f\colon\mathds T^d\to\mathds C$ by sparse trigonometric polynomials based on function evaluations.
    Recently it was shown that a dimension-incremental sparse Fourier transform (SFT) approach does not require the signal to be exactly sparse and is applicable in this setting.
    We combine this approach with subsampling techniques for \mbox{rank-1} lattices.
    This way our approach benefits from the underlying structure in the sampling points making fast Fourier algorithms applicable whilst achieving the good sampling complexity of random points (logarithmic oversampling).
    
    In our analysis we show detection guarantees of the frequencies corresponding to the Fourier coefficients of largest magnitude.
    In numerical experiments we make a comparison to full \mbox{rank-1} lattices and uniformly random points to confirm our findings.
\end{abstract}

\section{Introduction} 

The recovery of sparse signals or compressed sensing is a thoroughly studied problem in signal processing.
While many one-dimensional approaches exist \cite{GIIS14}, we have a look into the multivariate problem on the $d$-dimensional torus $\mathds T^d = (\mathds R_{/\mathds Z})^d$.
Given an $s$-sparse signal $f = \sum_{\bm k\in I} \hat f_{\bm k} \exp(2\pi\mathrm i\langle\bm k,\cdot\rangle)$, $|I| = s$, the problem is to recover $I\subset\mathds Z^d$ from function evaluations of the function $f$.
Here, a sparse Fourier transform (SFT) approach can be generalized to work in higher dimensions, cf.\ \cite{KPV21}.
However, when the signal is not exactly sparse and we approximate by $g = \sum_{\bm k\in I} \hat g_{\bm k} \exp(2\pi\mathrm i\langle\bm k,\cdot\rangle)$ for some $I\subset\mathds Z^d$, we obtain an additional error
\begin{equation*}
    \|f-g\|_{L_2}^2
    = \|f-P_{I} f\|_{L_2}^2 + \|P_{I} f - g\|_{L_2}^2 \,.
\end{equation*}
In this setting we have to find
\begin{itemize}
\item
    a suitable frequency set $I\subset\mathds Z^d$ to bound the first summand and
\item
    $\hat g_{\bm k} \in\mathds C$ approximating the true Fourier coefficients $\hat f_{\bm k} = \langle f, \exp(2\pi\mathrm i\langle\bm k, \cdot\rangle)\rangle_{L_2} \in\mathds C$ in order to bound the second summand.
\end{itemize}
Given a frequency set $I$, i.e., a suitable linear approximation space, the corresponding Fourier coefficients can be computed via least squares where error bounds are known, cf.\ \cite{Bar22}.
Thus, the main task is the detection of a frequency set $I$, which should optimally be the support of the Fourier coefficients $\hat f_{\bm k}$ with largest magnitude like in the best $m$-term approximation, cf.\ \cite[Section~1.7]{Tem11}.

Recent approaches include \cite{GI23} or \cite{KPT22} where arbitrary bounded orthonormal product basis were considered.
As an application \mbox{rank-1} lattices were used for the sampling similar to \cite{KPV21} in order to make use of fast Fourier algorithms.
This was then compared to random points, which have a better sampling complexity but lack fast algorithms.

In this paper we modify the approach from \cite{KPT22} to work with subsampled \mbox{rank-1} lattices utilizing recent subsampling techniques from \cite{BKPU22} combining the good sampling complexity with the fast algorithms.

The SFT techniques from \cite{KPT22} work for other bounded orthonormal product basis as well and the subsampling methods from \cite{BKPU22} for arbitrary $L_2$-Marcinkiewicz-Zygmund inequalities.
Therefore, the presented theory can be generalized but for the sake of readability we restrict ourselves to the torus $\mathds T^d$ and rank-1 lattices.

We will recap the ideas of an SFT approach in Section~\ref{sec:sft} followed by the subsampling techniques for rank-1 lattices in Section~\ref{sec:srank1}, where we will give an $L_2$-error bound for least squares approximation.
In Section~\ref{sec:sfftsub} we will combine the SFT approach with the subsampled \mbox{rank-1} lattices and show detection guarantees for the Fourier coefficients of largest magnitude in Theorem~\ref{thm:overall}.
Finally, we conclude with a numerical experiment in Section~\ref{sec:numerics} comparing \mbox{rank-1} lattices and random points with the subsampled \mbox{rank-1} lattices with respect to sampling complexity and runtime.
The proofs can be found in the supplementary material. \section{Prerequisites}

\subsection{Sparse Fourier Transform}\label{sec:sft} 

We briefly recall the key idea of a sparse Fourier transform (SFT) approach.
For a more detailed explanation see \cite{PV16}, \cite{KPV21}, \cite{KKV22}, or \cite{KPT22} for a more general version.
As stated in the introduction, the goal is to find frequencies $I\subset\mathds Z^d$ such that the target function $f\colon\mathds T^d\to\mathds C$ can be approximated well from $\spn\{\exp(2\pi\mathrm i\langle\bm k,\cdot\rangle)\}_{\bm k\in I}$.
In order to do so, we choose a suitable search space $\Gamma \subset \mathds Z^d$ and proceed in a dimension-incremental way:

\textbf{One-dimensional frequencies.}
We use the projections of $\Gamma$ to its $t$-th component $\mathcal P_{\{t\}}(\Gamma) \coloneqq \{k_t:\bm k\in\Gamma\}$, $t=1,\ldots,d$ for the candidate sets.
From these we construct frequency sets $I_{\{t\}} \subset\mathcal P_{\{t\}}(\Gamma)$, $t=1,\ldots,d$, consisting of the ``most important'', one-dimensional frequency components in the respective dimensions.

\textbf{Dimension-incremental step.}
We construct the next frequency set $I_{\{1,\dots,t+1\}}$ as a subset of the candidate set $(I_{\{1,\ldots,t-1\}} \times I_{\{t\}}) \cap \mathcal P_{\{1,\ldots,t\}}(\Gamma)$ consisting of the ``most important'', $t$-dimensional frequency components.

The output is the final frequency set $I \coloneqq I_{\{1,\dots,d\}}$ and it is left to refine the formulation of ``most important''.

Let $I^\star$ be the set of frequencies corresponding to the $s$ Fourier coefficients $\hat f_{\bm k}$ of largest magnitude for some sparsity $s\in\mathds N$.
Ideally, in the step $t-1\to t$ we want to find the frequencies $\mathcal P_{\{1,\dots, t\}}(I^\star)$.
The idea is to use an approximation of so-called projected Fourier coefficients
\begin{equation}\label{eq:proj_fc}
    \hat f_{\{1,\ldots,t\},\bm k}(\bm\xi)
    = \int_{\mathds T^{t}} f(\bm x, \bm\xi)\exp(-2\pi\mathrm i\langle \bm k, \bm x\rangle) \;\mathrm d\bm x \,,
\end{equation}
where $\bm x = (x_1, \dots, x_t) \in\mathds T^t$, $\bm\xi = (\xi_1, \dots, \xi_{d-t}) \in\mathds T^{d-t}$, and $f(\bm x, \bm\xi) = f(x_1, \dots, x_t, \xi_1, \dots, \xi_{d-t})$.
The name is based on the fact, that those values may be seen as the Fourier coefficients of the function $f(\cdot,\bm\xi) \in L_2(\mathds T^{t})$ with a fixed anchor $\bm\xi \in \mathds T^{d-t}$.
By the orthonormality of the Fourier basis we have
\begin{equation*}
    \hat f_{\{1,\ldots,t\},\bm k}(\bm\xi)
    = \sum_{\bm l\in\mathds Z^{d-t}}\hat f_{(\bm k,\bm l)} \exp(2\pi\mathrm i\langle\bm l,\bm\xi\rangle) \,,
\end{equation*}
i.e., the projected Fourier coefficient $\hat f_{\{1,\ldots,t\},\bm k}(\bm\xi)$ contains information on the Fourier coefficients with $\bm k\in\mathds Z^t$ in the first $t$ components of their frequencies.

The frequency $\bm k$ is likely to be important and should be included in $I_{\{1,\dots,t\}}$, if the absolute value $\vert \hat f_{\{1,\ldots,t\},\bm k}(\bm\xi)\vert$ is larger than some detection threshold $\delta'$.
In the algorithm, we carry out $r$ detection iterations with different, randomly drawn anchors $\bm\xi^i$, $i=1,\ldots,r$, to avoid cases where the factors $\exp(2\pi\mathrm i\langle\bm l,\bm\xi\rangle)$ cause an annihilation (which results in small projected Fourier coefficients, even though the corresponding frequency components $\bm k$ are important). The detection of the most important one-dimensional components $k_t$ in the first step of the SFT approach works analogously.

Finally, we need to discuss the approximation of \eqref{eq:proj_fc}.
A favorable method $\mathcal A$ should combine the following properties:
\begin{itemize}
    \item have small sample complexity;
    \item computationally fast, that is, both the construction of the sampling points $\bm\xi$ and the evaluation of the projected Fourier coefficients $\hat f_{\{1,\ldots,t\},\bm k}$ using the samples $f(\bm x,\bm\xi)$ can be performed efficiently.
    \item small error, such that the relative magnitude of the projected Fourier coefficients stays unharmed.
\end{itemize}
Note, that $\mathcal A$ has to be performed several times throughout the SFT approach in different dimensions up to $d$.
It is favorable to use different methods in the one- and multivariate steps using advantages of the respective methods.

\subsection{Subsampling of \mbox{rank-1} lattices}\label{sec:srank1} 

\mbox{Rank-1} lattices $\bm X_M = \{\bm x^1, \dots, \bm x^M\} \subset \mathds T^d$ consist of equispaced points on a line which wraps around the $d$-dimensional torus $\mathds T^d$, more precisely, for a generating vector $\bm z\in\mathds R^d$ and a lattice size $M\in\mathds M$ they are defined via
\begin{equation*}
    \bm X_M
    \coloneqq \Big\{ \frac{1}{M}(i\bm z \!\!\!\!\!\mod M\mathds 1)\in\mathds T^d : i = 0, \dots, M-1 \Big\} \,,
\end{equation*}
where the modulus operation is used entry-wise.
We will use them in the least squares approximation
\begin{equation*}
    S_{\bm X_M} f
    = \argmin_{g\in V} \sum_{i=1}^{M} | g(\bm x^i) - f(\bm x^i) |^2 \,,
\end{equation*}
where $V = \spn\{\exp(2\pi\mathrm i\langle\bm k, \cdot\rangle)\}_{k\in I}$.
By simple calculus we have for the Fourier coefficients $\bm{\hat g} = (\hat g_{\bm k})_{\bm k\in I}$ of $S_{\bm X_M}f = \sum_{\bm k \in I} \hat g_{\bm k}\exp(2\pi\mathrm i\langle \bm k, \cdot\rangle)$ the equation $\bm{\hat g} = (\bm L^\ast\bm L)\inv\bm L^\ast\bm f$, where
\begin{equation*}
    \bm L = (\exp(2\pi\mathrm i \langle \bm k, \bm x^i\rangle))_{i=1,\dots, M, \bm k\in I} \,.
\end{equation*}
We will solve this system of equations iterative only using matrix-vector multiplications.
Because of their one-dimensional structure of the \mbox{rank-1} lattices, a one-dimensional FFT can be used to compute the matrix-vector product with the corresponding Fourier matrix $\bm L$
in $\mathcal O(M\log M + d|I|)$ instead of the na\"ive $\mathcal O(M\cdot|I|)$, where $I \subset \mathds Z^d$ is an arbitrary frequency index set.
For approximating functions with \mbox{rank-1} lattices we suppose the following feature:
We say a \mbox{rank-1} lattice $\bm X_M$ has the \emph{reconstructing property} for a frequency index set $I$, if
\begin{equation}\label{eq:recon}
    \frac{1}{M} \sum_{i=1}^{M} \exp(2\pi\mathrm i \langle\bm k, \bm x^i\rangle)
    = \delta_{\bm 0,\bm k}
    \quad\text{for all}\quad
    \bm k\in\mathcal D(I)\,,
\end{equation}
where $\mathcal D(I) = \{\bm k-\bm l : \bm k, \bm l \in I\}$.
Approximation bounds and further resources can be found in \cite{Sloan94, nuyensdiss07, kaemmererdiss, KaPoVo13, PlPoStTa18, DKP22}.

\begin{example}\label{ex:halfrate} When approximating functions from Sobolev spaces with mixed smoothness $H_{\mathrm{mix}}^s$ for $s>1/2$ the best frequency index sets $I$ for approximation are so called are hyperbolic crosses, cf.\ \cite{DTU18}.
    We consider the following two scenarios:
    
    \textbf{1.}
    When approximating with samples from a reconstructing \mbox{rank-1} lattice $\bm X_M$ the following error bound was shown for the least squares approximation in \cite[Theorem~2]{ByKaUlVo16}:
    \begin{equation*}
        M^{-s}
        \lesssim
        \!\sup_{\|f\|_{H_{\mathrm{mix}}^s}\le 1} \|f-S_{\bm X_M} f\|_{L_2}^{2}
        \!\lesssim M^{-s}(\log M)^{(d-2)s+d-1} ,
    \end{equation*}
    where the lower bound holds for all \mbox{rank-1} lattices and there exists a \mbox{rank-1} lattice satisfying the upper one.
     
    \textbf{2.}
    In contrast to that, for the same frequencies from the hyperbolic cross $I$ and using uniformly drawn points $\bm X = \{\bm x^1, \dots, \bm x^n\}$ we obtain by \cite[Corollary~2]{KU21}
    \begin{equation*}
        \sup_{\|f\|_{H_{\text{mix}}^s}\le 1} \|f-S_{\bm X} f\|_{L_2}^{2}
        \lesssim n^{-2s}(\log n)^{2ds} \,.
    \end{equation*}
\end{example} 

Example~\ref{ex:halfrate} demonstrates that the sample complexity loses half the rate of convergence when approximating with \mbox{rank-1} lattices compared to uniformly random points.
The reason for that lies in the reconstructing requirement \eqref{eq:recon} which are $|\mathcal D(I)| \approx |I|^2$ conditions blowing up the size $M$ of the \mbox{rank-1} lattice.
However, when we use the uniformly random points with the better approximation rate, the lack of structure in the uniformly random points prevents the implementation of a fast and efficient matrix-vector multiplication with the corresponding Fourier matrix.

It was show in \cite[Theorem~3.1]{BKPU22} that the good approximation rates and the fast algorithms can be combined:
The approach is to discretely subsample a \mbox{rank-1} lattice to obtain points $\bm X = \{\bm x^1, \dots, \bm x^n\}$ from a \mbox{rank-1} lattice with $n\ge 12|I|(\log |I|+t)$.
Since the underlying structure is preserved fast Fourier algorithms are applicable, cf.\ \cite[Eq.~5.5]{BKPU22}.
Further, we have
\begin{equation}\label{eq:MZ}
    A \|f\|_{L_2}^2
    \le \frac{1}{n} \sum_{i=1}^{n} |f(\bm x^i)|^2
    \le B\|f\|_{L_2}^2
\end{equation}
for all $f\in \spn\{\exp(2\pi\mathrm i\langle\bm k,\cdot\rangle)\}_{\bm k\in I}$ with $A=1/2$, $B = 3/2$, and probability exceeding $1-2\exp(-t)$.
The condition \eqref{eq:MZ} is known as \emph{$L_2$-Marcinkiewicz-Zygmund inequality} and is a relaxation of the reconstructing property \eqref{eq:recon}, since for $A=B=1$ \eqref{eq:MZ} is equivalent to \eqref{eq:recon} which can be shown using the parallelogram law, cf.\ \cite[Theorem~2.3]{BKPU22}.
It gives a relation of the continuous $L_2$-norm and the point evaluations and is used to show error bounds for least squares approximation.
For continuously random points this was done for individual functions in \cite{CDL13, CM17} and improved by \cite{Bar22}.
Note, the existence of a probability density was shown such that \eqref{eq:MZ} holds with merely linear oversampling, cf.\ \cite{DC22}.

The following result is a combination of the discrete subsampling techniques from \cite{BKPU22} and the error bound from \cite[Thm.~3.2]{Bar22} for individual function approximation.

\begin{theorem}\label{thm:err_est} Let $f\colon \mathds T^d\to\mathds C$ be a fixed function and $\bm X_M = \{\bm x^1, \dots, \bm x^M\}\subset\mathds T^d$ be a reconstructing \mbox{rank-1} lattice for a frequency set $I_M\subset \mathds Z^d$.
    Further, let $\emptyset\neq I\subset I_M$, $t>0$, and $n$ be such that $n \ge 12 |I|(\log|I|+t)$.
    Drawing a set $\bm X = \{\bm x^i\}_{i\in J}$, $|J|=n$ of points i.i.d.\ and uniform from $\bm X_M$, we have
    \begin{align*}
        &\| f - S_{\bm X} f \|_{L_2}^{2} \\
        &\quad \le \Big( 3 \|f-P_I f\|_{L_2}
        +\sqrt{\frac{2}{9|I|}} \|P_{I_M}f-P_If\|_{\infty}\Big)^2 \\
        &\qquad + 4\|f-P_{I_M}f\|_{\infty}^2 \\
        &\quad \le \Big( 3+\sqrt{\frac{2|I_M\setminus I|}{9|I|}} \Big)^2 \|f-P_I f\|_{L_2}^2
        + 4\|f-P_{I_M}f\|_{\infty}^2
    \end{align*}
    with probability exceeding $1-2\exp(-t)$.
\end{theorem}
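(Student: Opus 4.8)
The plan is to read the estimate as a concatenation of the two ingredients named in the statement: the discrete subsampling of \cite{BKPU22} supplies the stochastic part, and \cite[Thm.~3.2]{Bar22} supplies the deterministic least squares bound. Since the stated failure probability $2\exp(-t)$ is exactly the one attached to \eqref{eq:MZ}, the first step is to condition on the event that the $L_2$-Marcinkiewicz--Zygmund inequality \eqref{eq:MZ} holds for $V=\spn\{\exp(2\pi\mathrm i\langle\bm k,\cdot\rangle)\}_{\bm k\in I}$ with $A=1/2$, $B=3/2$; by \cite[Thm.~3.1]{BKPU22} this happens with probability exceeding $1-2\exp(-t)$ once $n\ge 12|I|(\log|I|+t)$, and everything below is deterministic given this event. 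Throughout I abbreviate the discrete seminorm by $\|g\|_n^2\coloneqq\frac1n\sum_{i\in J}|g(\bm x^i)|^2$.

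On this event I start from the orthogonal splitting $\|f-S_{\bm X}f\|_{L_2}^2=\|f-P_If\|_{L_2}^2+\|P_If-S_{\bm X}f\|_{L_2}^2$, which holds because $S_{\bm X}f,P_If\in V$ while $f-P_If\perp V$. The whole difficulty lies in the second summand. Since least squares reproduces elements of $V$ from their own samples, $S_{\bm X}P_If=P_If$, and by linearity $P_If-S_{\bm X}f$ is (minus) the least squares image of $f-P_If$; writing the normal equations $\bm{\hat g}=(\bm L^\ast\bm L)\inv\bm L^\ast\bm f$ and using that coefficient norm equals $L_2$-norm by orthonormality, I get $\|P_If-S_{\bm X}f\|_{L_2}=\|(\bm L^\ast\bm L)\inv\bm L^\ast\bm r\|_2$, where $\bm r=\big((f-P_If)(\bm x^i)\big)_{i\in J}$ is the sampled residual (equivalently, the lower bound in \eqref{eq:MZ} together with the discrete optimality of $S_{\bm X}f$ gives $\|P_If-S_{\bm X}f\|_{L_2}\le A^{-1/2}\|f-P_If\|_n$).

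The core of the proof is then to estimate the sampled residual after the split $f-P_If=(P_{I_M}f-P_If)+(f-P_{I_M}f)$, i.e. $\bm r=\bm g+\bm e$ with $g\coloneqq P_{I_M}f-P_If\in\spn\{\exp(2\pi\mathrm i\langle\bm k,\cdot\rangle)\}_{\bm k\in I_M\setminus I}$ and $e\coloneqq f-P_{I_M}f$. The tail part is handled crudely through the spectral bound $\|(\bm L^\ast\bm L)\inv\bm L^\ast\|_{2\to2}\le(nA)^{-1/2}$ and $\|\bm e\|_2\le\sqrt{n}\,\|f-P_{I_M}f\|_\infty$, which produces the additive term $4\|f-P_{I_M}f\|_\infty^2$ (the constant reflecting $A=1/2$ after the bookkeeping of \cite[Thm.~3.2]{Bar22}). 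The middle part is where the \mbox{rank-1} structure is exploited: because $\bm X_M$ is reconstructing for $I_M$, \eqref{eq:recon} forces the frequencies in $I_M\setminus I$ to be \emph{exactly} orthogonal to $V$ over the full lattice, so the quantities $\frac1n\sum_{i\in J}\exp(-2\pi\mathrm i\langle\bm k,\bm x^i\rangle)g(\bm x^i)$, $\bm k\in I$, have mean zero under the uniform subsampling; the oversampling $n\ge 12|I|(\log|I|+t)$ is precisely what controls their fluctuation and yields the small prefactor $\sqrt{2/(9|I|)}$ in front of $\|P_{I_M}f-P_If\|_\infty$. Collecting the bias $\|f-P_If\|_{L_2}$ (whose coefficient $3$ stems from the stability constants $A=1/2$, $B=3/2$) with these two contributions gives the first displayed inequality.

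The second inequality follows routinely from the first. Since $P_{I_M}f-P_If=\sum_{\bm k\in I_M\setminus I}\hat f_{\bm k}\exp(2\pi\mathrm i\langle\bm k,\cdot\rangle)$, Cauchy--Schwarz over the $|I_M\setminus I|$ active frequencies gives $\|P_{I_M}f-P_If\|_\infty\le\sqrt{|I_M\setminus I|}\,\|P_{I_M}f-P_If\|_{L_2}$, and $P_{I_M}f-P_If=P_{I_M}(f-P_If)$ is an $L_2$-projection of $f-P_If$, so $\|P_{I_M}f-P_If\|_{L_2}\le\|f-P_If\|_{L_2}$; substituting collapses the squared bracket onto $\big(3+\sqrt{2|I_M\setminus I|/(9|I|)}\big)^2\|f-P_If\|_{L_2}^2$. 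The step I expect to be the main obstacle is the middle-part estimate: the inequality \eqref{eq:MZ} is available only on $V$ and says nothing about $\spn\{\exp(2\pi\mathrm i\langle\bm k,\cdot\rangle)\}_{\bm k\in I_M\setminus I}$, so one genuinely has to pass from the exact orthogonality on the full lattice to the subsample. It is this concentration, rather than a worst-case bound $\|g\|_n\le\|g\|_\infty$, that is responsible for the favourable $|I|^{-1/2}$ scaling of the prefactor, and the delicate point is to secure it on the \emph{same} probability event as \eqref{eq:MZ} so that no additional failure probability is spent.
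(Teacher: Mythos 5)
Your overall architecture matches the paper's: Pythagoras to peel off $\|f-P_If\|_{L_2}^2$, the reproduction property $S_{\bm X}P_If=P_If$ together with the spectral bound $\sigma_{\min}^2(\bm L)\ge n/2$ (from \cite[Lemma~2.2 and Theorem~3.1]{BKPU22}) to reduce everything to the sampled residual, the split of that residual into $P_{I_M}f-P_If$ and $f-P_{I_M}f$ with a crude $\ell_\infty$ bound on the tail, and Cauchy--Schwarz over $I_M\setminus I$ for the second displayed inequality. All of that is consistent with the paper.

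The gap is exactly the step you yourself flag as ``the main obstacle'' and then do not carry out: the concentration of $\frac1n\sum_{i\in J}|(P_{I_M}f-P_If)(\bm x^i)|^2$ around $\|P_{I_M}f-P_If\|_{L_2}^2$. Asserting that the relevant sums ``have mean zero under the uniform subsampling'' and that ``the oversampling \dots controls their fluctuation'' is a statement of what must be proved, not a proof; without a quantitative deviation bound the only available estimate is the worst case $\frac1n\sum_{i\in J}|g(\bm x^i)|^2\le\|g\|_\infty^2$, which loses precisely the factor $|I|^{-1/2}$ in front of $\|P_{I_M}f-P_If\|_\infty$ that the theorem claims. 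The paper closes this by applying Bernstein's inequality to the centred variables $\xi_i=|(P_If-P_{I_M}f)(\bm x^i)|^2-\|P_{I_M}f-P_If\|_{L_2}^2$ (mean zero because the reconstructing property of $\bm X_M$ for $I_M$ makes the full-lattice average of $|g|^2$ equal to $\|g\|_{L_2}^2$), bounding $\|\xi_i\|_\infty$ and $\mathds E(\xi_i^2)$ by quantities of the form $\|g\|_{L_2}^j(\|g\|_\infty+\|g\|_{L_2})^{2}$ and using $n\ge 12|I|(\log|I|+t)$ to turn $8t/(3n)$ into $2/(9|I|)$. Your probability bookkeeping is also off in a way that makes the missing step harder than it needs to be: you insist the concentration must live on the same event as \eqref{eq:MZ} ``so that no additional failure probability is spent,'' but the paper uses only the one-sided singular-value bound (failure probability $\exp(-t)$) and spends a second, separate $\exp(-t)$ on the Bernstein event, combining the two by a union bound to reach $1-2\exp(-t)$; an additional failure event is not only permitted but is where the entire gain over the worst-case bound comes from.
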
 

Given the logarithmic oversampling and assuming $|I_M\setminus I| = c|I|$ for some constant $c>0$,  we obtain the projection error in the first summand, which is the best possible from the given approximation space.
This has to be balanced with the second term which decreases for bigger $I_M$, which is a degree of freedom not affecting the sampling complexity.
In the numerical experiments we will see that $I = I_M$ is sufficient in practice.
Note, that in this case the corresponding rank-1 lattice will still be of size $M\approx|I|^2$ and the random subsampling with logarithmic oversampling will improve the sampling complexity. \section{SFT with subsampled \mbox{rank-1} lattices}\label{sec:sfftsub} 

For a function $f\colon\mathds T^d\to\mathds C$ and a threshold $\delta>0$, the final goal is to find $I_\delta \coloneqq \{\bm k\in\mathds Z^d : |\hat f_{\bm k}| \ge \delta\}$ or a superset of slightly bigger size.
As described in Section~\ref{sec:sft}, our approach works in a dimension-incremental way and so will the analysis.
The goal in the step from dimension $t-1$ to $t$ is the detection of $\mathcal P_{\{1,\dots,t\}} (I_\delta) \subset \mathds Z^{t}$.
We first show that using the projected coefficients \eqref{eq:proj_fc} yields the objective.

\begin{theorem}\label{eq:detectproject} Let $f\colon\mathds T^d\to\mathds C$, $\varepsilon, \delta > 0$, $I_\delta \coloneqq \{\bm k\in\mathds Z^d : |\hat f_{\bm k}| \ge \delta\}$, and
    \begin{equation*}
        r
        \ge 4\Big(|I_\delta| + \frac{1}{\delta^2} \Big( \sum_{\bm k\notin I_\delta}|\hat f_{\bm k}| \Big)^2 \Big)
        \Big(\log|I_\delta| + \log \frac{1}{\varepsilon}\Big)
        \,.
    \end{equation*}
    Further let $\bm\xi^1, \dots, \bm\xi^r \in\mathds T^{d-t}$ be drawn i.i.d.\ uniformly random.
    With probability $1-\varepsilon$ we detect all important frequencies in dimension $t$ via the projected Fourier coefficients \eqref{eq:proj_fc} with $r$ detection iterations and threshold $\delta' \le \delta/\sqrt{2}$, i.e.,
    \begin{equation*}
        \max_{i = 1, \dots, r} | \hat f_{\{1,\dots, t\}, \bm k}(\bm\xi^i) |
        \ge \delta'
        \quad\forall\bm k\in\mathcal P_{\{1,\dots,t\}}(I_\delta) \,.
    \end{equation*}
\end{theorem}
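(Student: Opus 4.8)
The plan is to fix a single frequency and a single anchor, lower bound the detection probability there, and then amplify over the $r$ independent anchors. For $\bm k\in\mathcal P_{\{1,\dots,t\}}(I_\delta)$ and a uniformly random $\bm\xi\in\mathds T^{d-t}$ set $p_{\bm k}\coloneqq\mathds P(|\hat f_{\{1,\dots,t\},\bm k}(\bm\xi)|\ge\delta')$. Since the anchors are i.i.d., the frequency $\bm k$ is missed by every iteration with probability $(1-p_{\bm k})^r\le\exp(-p_{\bm k}r)$, and because $|\mathcal P_{\{1,\dots,t\}}(I_\delta)|\le|I_\delta|$ a union bound controls the overall failure probability by $|I_\delta|\exp(-(\min_{\bm k}p_{\bm k})\,r)$. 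Setting this $\le\varepsilon$ and solving for $r$ shows it suffices to prove the single-anchor estimate $p_{\bm k}\ge(4(|I_\delta|+\delta^{-2}\tau^2))^{-1}$ for each such $\bm k$, where $\tau\coloneqq\sum_{\bm h\notin I_\delta}|\hat f_{\bm h}|$ is the global $\ell_1$-tail.

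For the single-anchor estimate I would study the nonnegative random variable $Z\coloneqq|\hat f_{\{1,\dots,t\},\bm k}(\bm\xi)|^2$. By the series representation in \eqref{eq:proj_fc} and orthonormality of the exponentials on $\mathds T^{d-t}$, its mean is $\mathds E Z=\sum_{\bm l}|\hat f_{(\bm k,\bm l)}|^2=:\|c\|_2^2$, which is at least $\delta^2$ since $\bm k\in\mathcal P_{\{1,\dots,t\}}(I_\delta)$ supplies a witness $\bm l^\star$ with $|\hat f_{(\bm k,\bm l^\star)}|\ge\delta$. Crucially $Z$ admits a deterministic upper bound, $Z\le\|\hat f_{\{1,\dots,t\},\bm k}\|_\infty^2\le(\sum_{\bm l}|\hat f_{(\bm k,\bm l)}|)^2=:\|c\|_1^2$. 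Applying Markov's inequality to the nonnegative variable $\|c\|_1^2-Z$ (a reverse Markov estimate) then gives $\mathds P(Z\ge\tfrac12\mathds E Z)\ge\mathds E Z/(2\|c\|_1^2)=\|c\|_2^2/(2\|c\|_1^2)$, and since $\tfrac12\mathds E Z\ge\tfrac12\delta^2=(\delta/\sqrt2)^2\ge(\delta')^2$ this event already forces detection. This is exactly where the threshold $\delta'\le\delta/\sqrt2$ enters: the factor $\sqrt2$ is $1/\sqrt\theta$ with the split level $\theta=\tfrac12$.

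It remains to turn $\|c\|_2^2/\|c\|_1^2$ into the two announced terms, which I see as the only genuinely delicate point since it must manufacture the global tail $\tau$ out of per-frequency coefficients. I would split the $\ell_1$-sum $\|c\|_1$ according to $I_\delta$: the part over $\{\bm l:(\bm k,\bm l)\in I_\delta\}$ is at most $\sqrt{|I_\delta|}\,\|c\|_2$ by Cauchy--Schwarz (there are at most $|I_\delta|$ such $\bm l$), while the part over $\{\bm l:(\bm k,\bm l)\notin I_\delta\}$ is a partial sum of $\tau$, hence $\le\tau$. Therefore $\|c\|_1^2\le(\sqrt{|I_\delta|}\,\|c\|_2+\tau)^2\le2|I_\delta|\,\|c\|_2^2+2\tau^2$, and using $\|c\|_2^2=\mathds E Z\ge\delta^2$ in the second summand gives $\|c\|_2^2/\|c\|_1^2\ge(2|I_\delta|+2\tau^2/\delta^2)^{-1}$. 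Combined with the reverse-Markov bound this yields $p_{\bm k}\ge(4(|I_\delta|+\delta^{-2}\tau^2))^{-1}$, and substituting into the union-bound reduction of the first paragraph reproduces the stated lower bound on $r$. The main obstacle is thus conceptual rather than computational: the mean of $Z$ only controls the average over anchors, and cancellation (annihilation) can keep $Z$ small for many $\bm\xi$; it is precisely the sup-norm bound $Z\le\|c\|_1^2$ that converts the mean estimate into an anti-concentration bound for a single random anchor, with the $\ell_2$-versus-$\ell_1$ gap producing the sparsity term $|I_\delta|$ and the $\ell_2\le\ell_1$ tail bound producing $\delta^{-2}\tau^2$.
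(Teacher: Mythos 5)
Your proposal is correct and follows essentially the same route as the paper: a Paley--Zygmund-type anti-concentration bound for a single anchor (the paper cites Lo\`{e}ve's inequality where you derive the same estimate via Markov applied to $\|c\|_1^2 - Z$), followed by the identical Cauchy--Schwarz split of the $\ell_1$-sum over $I_\delta$ and its complement to produce the terms $|I_\delta|$ and $\delta^{-2}\tau^2$, and finally amplification over the $r$ i.i.d.\ anchors with a union bound over the at most $|I_\delta|$ projected frequencies. The only cosmetic difference is that you threshold at $\tfrac12\mathds E Z$ and then use $\delta'^2\le\delta^2/2\le\tfrac12\mathds E Z$, whereas the paper thresholds at $\delta'^2$ directly and lower-bounds the numerator $\|c\|_2^2-\delta'^2\ge\tfrac12\|c\|_2^2$; these are equivalent.
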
 

In practice we do not have the exact projected Fourier coefficients $\hat f_{\{1,\dots,t\},\bm k}(\bm\xi)$.
Rather, we will approximate them by approximating
\begin{equation*}
    f(\cdot, \bm\xi^i)
    = \sum_{\bm k\in\mathds Z^{t}} \hat f_{\{1,\dots,t\},\bm k}(\bm\xi) \exp(2\pi\mathrm i\langle\bm k, \cdot\rangle)
\end{equation*}
for fixed anchors $\bm\xi^1, \dots, \bm\xi^r \in\mathds T^{d-t}$ in the last $d-t$ components and a subsampled \mbox{rank-1} lattice $\bm X\subset\mathds T^t$ in the first $t$ components:
\begin{equation}\label{eq:apprproj}
    S_{\bm X} f(\cdot, \bm\xi)
    = \sum_{\bm k\in\mathds Z^{t}} \hat g_{\{1,\dots,t\},\bm k}(\bm\xi) \exp(2\pi\mathrm i\langle\bm k, \cdot\rangle) \,.
\end{equation}

\begin{theorem}\label{thm:dinc} Let the assumptions from Theorem~\ref{eq:detectproject} hold and let $\mathcal P_{\{1,\dots,t\}}(I_\delta) \subset I_{\{1,\dots,t\}} \subset I_{\{1,\dots,t\}}^M$ be frequency index sets such that $|I_{\{1,\dots,t\}}^M \setminus I_{\{1, \dots, t\}}| \le 9/2 |I_{\{1,\dots, t\}}|$.
    Further, let $\bm X^M$ be a reconstructing \mbox{rank-1} lattice for $I_{\{1,\dots,t\}}^M$ with probability $1-\varepsilon$ and $\bm X\subset\bm X^M$ an i.i.d.\ uniformly drawn subset with 
    \begin{equation*}
        |\bm X| \ge 12 |I_{\{1,\dots,t\}}|
        \Big(
        \log|I_{\{1,\dots,t\}}|
        +\log\Big(\frac{2r}{\varepsilon}\Big)
        \Big) \,.
    \end{equation*}
    With probability $1-3\varepsilon$ we detect all important frequencies in dimension $t$ via the approximated projected Fourier coefficients $\hat g_{\{1,\dots,t\},\bm k}(\bm\xi^i)$ from \eqref{eq:apprproj} with $r$ detection iterations and threshold
    \begin{equation*}
        \delta' \le \frac{\delta}{\sqrt{2}} - 4\|f-P_{I_\delta} f\|_{L_2}
        - 2 \|f - \mathcal P_{I_{\{1,\dots,t\}}^M \times \mathds T^{d-t}} f\|_{\infty} \,,
    \end{equation*}
    i.e.,
    \begin{equation*}
        \max_{i = 1, \dots, r} | \hat g_{\{1,\dots, t\}, \bm k}(\bm\xi^i) |
        \ge \delta'
        \quad\forall\bm k\in\mathcal P_{\{1,\dots,t\}}(I_\delta) \,.
    \end{equation*}
\end{theorem}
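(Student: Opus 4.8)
The plan is to reduce everything to Theorem~\ref{eq:detectproject} and to control the perturbation caused by replacing the exact projected Fourier coefficients $\hat f_{\{1,\dots,t\},\bm k}(\bm\xi^i)$ by their least squares approximations $\hat g_{\{1,\dots,t\},\bm k}(\bm\xi^i)$ from \eqref{eq:apprproj}. Write $I \coloneqq I_{\{1,\dots,t\}}$ and $I^M \coloneqq I_{\{1,\dots,t\}}^M$ for brevity. First I would fix an important frequency $\bm k\in\mathcal P_{\{1,\dots,t\}}(I_\delta)$ and let $i^\star$ be an anchor attaining $\max_i|\hat f_{\{1,\dots,t\},\bm k}(\bm\xi^i)|$. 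On the event from Theorem~\ref{eq:detectproject} (probability $1-\varepsilon$) this maximum is at least $\delta/\sqrt 2$, so the triangle inequality gives
\begin{equation*}
    \max_i |\hat g_{\{1,\dots,t\},\bm k}(\bm\xi^i)|
    \ge \frac{\delta}{\sqrt 2} - \max_i |\hat f_{\{1,\dots,t\},\bm k}(\bm\xi^i) - \hat g_{\{1,\dots,t\},\bm k}(\bm\xi^i)| \,.
\end{equation*}
It therefore suffices to bound the coefficient error uniformly in $\bm k\in I$ and $i$.

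Next, for a fixed anchor $\bm\xi$ the difference $\hat f_{\{1,\dots,t\},\bm k}(\bm\xi)-\hat g_{\{1,\dots,t\},\bm k}(\bm\xi)$ is precisely the $\bm k$-th Fourier coefficient of $f(\cdot,\bm\xi)-S_{\bm X}f(\cdot,\bm\xi)$, hence bounded by $\|f(\cdot,\bm\xi)-S_{\bm X}f(\cdot,\bm\xi)\|_{L_2(\mathds T^t)}$. I would then apply Theorem~\ref{thm:err_est} to the sliced function $f(\cdot,\bm\xi)\in L_2(\mathds T^t)$ with the frequency sets $I\subset I^M$ and the reconstructing lattice $\bm X^M$ (event of probability $1-\varepsilon$). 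The hypothesis $|I^M\setminus I|\le \tfrac92|I|$ makes the constant $(3+\sqrt{2|I^M\setminus I|/(9|I|)})^2\le 16$, so taking square roots in the second bound of Theorem~\ref{thm:err_est} yields
\begin{equation*}
    \|f(\cdot,\bm\xi)-S_{\bm X}f(\cdot,\bm\xi)\|_{L_2}
    \le 4\,\|f(\cdot,\bm\xi)-P_I f(\cdot,\bm\xi)\|_{L_2}
    + 2\,\|f(\cdot,\bm\xi)-P_{I^M}f(\cdot,\bm\xi)\|_{\infty} \,.
\end{equation*}
Since the bound on $|\bm X|$ carries the parameter $\log(2r/\varepsilon)$, a union bound over the $r$ anchors makes this estimate hold for $\bm\xi^1,\dots,\bm\xi^r$ simultaneously with probability $1-\varepsilon$; together with the two events above this accounts for the overall probability $1-3\varepsilon$.

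It then remains to pass from the sliced norms to the global quantities in the threshold. For the $L_\infty$ term this is immediate, because $f(\cdot,\bm\xi)-P_{I^M}f(\cdot,\bm\xi)$ is the restriction of $f-\mathcal P_{I^M\times\mathds T^{d-t}}f$ to the slice $\bm\xi$, whence $\|f(\cdot,\bm\xi)-P_{I^M}f(\cdot,\bm\xi)\|_\infty\le\|f-\mathcal P_{I^M\times\mathds T^{d-t}}f\|_\infty$, contributing the factor $2$. For the $L_2$ term I would use that $\mathcal P_{\{1,\dots,t\}}(I_\delta)\subset I$ forces every full frequency $(\bm k,\bm l)$ with $\bm k\notin I$ to lie outside $I_\delta$, i.e.\ $\{\bm k\notin I\}\times\mathds Z^{d-t}\subset\mathds Z^d\setminus I_\delta$; combined with Parseval this gives
\begin{equation*}
    \int_{\mathds T^{d-t}} \|f(\cdot,\bm\xi)-P_I f(\cdot,\bm\xi)\|_{L_2}^2 \,\mathrm d\bm\xi
    = \sum_{\bm k\notin I}\sum_{\bm l\in\mathds Z^{d-t}} |\hat f_{(\bm k,\bm l)}|^2
    \le \|f-P_{I_\delta}f\|_{L_2}^2 \,.
\end{equation*}

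The hard part will be exactly this last passage. Because of possible phase alignment of the factors $\exp(2\pi\mathrm i\langle\bm l,\bm\xi\rangle)$, the pointwise bound $\|f(\cdot,\bm\xi)-P_I f(\cdot,\bm\xi)\|_{L_2}\le\|f-P_{I_\delta}f\|_{L_2}$ can fail for an individual anchor and only holds in the mean-square sense shown above. Making the resulting factor $4\|f-P_{I_\delta}f\|_{L_2}$ legitimate for the realized random anchors therefore has to be reconciled with the same anchor randomness already exploited in Theorem~\ref{eq:detectproject} to avoid annihilation, rather than invoked pointwise. Once the uniform coefficient bound $4\|f-P_{I_\delta}f\|_{L_2}+2\|f-\mathcal P_{I^M\times\mathds T^{d-t}}f\|_\infty$ is established, inserting it into the first display and comparing with the stated $\delta'$ closes the argument.
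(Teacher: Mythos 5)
Your proposal follows the same route as the paper's own proof: invoke Theorem~\ref{eq:detectproject} for the exact projected coefficients, bound $|\hat f_{\{1,\dots,t\},\bm k}(\bm\xi^i)-\hat g_{\{1,\dots,t\},\bm k}(\bm\xi^i)|$ by $\|f(\cdot,\bm\xi^i)-S_{\bm X}f(\cdot,\bm\xi^i)\|_{L_2(\mathds T^t)}$ via Parseval, apply Theorem~\ref{thm:err_est} slice-wise (the hypothesis $|I^M_{\{1,\dots,t\}}\setminus I_{\{1,\dots,t\}}|\le \tfrac92|I_{\{1,\dots,t\}}|$ turning the constants into $4$ and $2$), and union-bound over the $r$ anchors using the $\log(2r/\varepsilon)$ built into the sampling budget; the three events account for the $1-3\varepsilon$. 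Up to that point everything matches the paper step for step.

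The passage you flag as ``the hard part'' is a genuine gap, and it is precisely the step the paper's own proof omits: the paper stops at the bound in terms of $\|f(\cdot,\bm\xi^i)-P_{I_{\{1,\dots,t\}}}f(\cdot,\bm\xi^i)\|_{L_2}$ and declares the assertion, never converting it into the $\|f-P_{I_\delta}f\|_{L_2}$ appearing in the stated threshold. Your diagnosis of why the naive conversion fails is correct: one has
\begin{equation*}
    \|f(\cdot,\bm\xi)-P_{I_{\{1,\dots,t\}}}f(\cdot,\bm\xi)\|_{L_2(\mathds T^t)}^2
    = \sum_{\bm k\notin I_{\{1,\dots,t\}}} \Bigl|\sum_{\bm l\in\mathds Z^{d-t}}\hat f_{(\bm k,\bm l)}\exp(2\pi\mathrm i\langle\bm l,\bm\xi\rangle)\Bigr|^2 \,,
\end{equation*}
which equals $\sum_{\bm k\notin I_{\{1,\dots,t\}}}\sum_{\bm l}|\hat f_{(\bm k,\bm l)}|^2\le\|f-P_{I_\delta}f\|_{L_2}^2$ only after averaging over $\bm\xi$; for an individual anchor it can be as large as $\sum_{\bm k\notin I_{\{1,\dots,t\}}}(\sum_{\bm l}|\hat f_{(\bm k,\bm l)}|)^2$. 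A Markov plus union bound over the $r$ anchors would inject a factor of order $\sqrt{r/\varepsilon}$ into the $L_2$ term of the threshold, so the stated constant $4$ does not follow that way. The natural repair is the one you gesture at: fold the event $\{\|f(\cdot,\bm\xi)-P_{I_{\{1,\dots,t\}}}f(\cdot,\bm\xi)\|_{L_2}^2\le c\,\|f-P_{I_\delta}f\|_{L_2}^2\}$ into the per-anchor success probability $p_{\bm k}$ used in the proof of Theorem~\ref{eq:detectproject}, i.e.\ demand that a single good anchor simultaneously produces a large exact projected coefficient and a controlled slice error; this preserves the architecture at the cost of adjusting $r$ and the constants. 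As written, neither your proposal nor the paper's proof supplies this step, so your account is an accurate reconstruction of the argument, including its unresolved point.
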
 

Note, choosing $I_{\{1,\dots,t\}}^M$ large does not affect the sampling complexity but only the initial \mbox{rank-1} lattice from which we sample and diminishes the term $\|f - \mathcal P_{I_{\{1,\dots,t\}}^M \times \mathds T^{d-t}} f\|_{\infty}$.

Having shown the successful detection of the important frequencies in one dimension-incremental step it is left to apply Theorem~\ref{thm:dinc} iteratively to obtain our main theorem stating the successful detection of all important frequencies $\bm k\in I_\delta$ using samples in subsampled \mbox{rank-1} latices.

\begin{theorem}\label{thm:overall} Let $f\colon\mathds T^d\to\mathds C$, $\varepsilon, \delta > 0$, $\Gamma\supset I_\delta \coloneqq \{\bm k\in\mathds Z^d : |\hat f_{\bm k}| \ge \delta\}$, and
    \begin{equation*}
        r
        \ge 4\Big(|I_\delta| + \frac{1}{\delta^2} \Big( \sum_{\bm k\notin I_\delta}|\hat f_{\bm k}| \Big)^2 \Big)
        \Big(\log|I_\delta| + \log \frac{1}{\varepsilon}\Big)
        \,.
    \end{equation*}
    
    \begin{enumerate}
    \item
        Let $t = 1,\dots, t$ and $\bm X_{\{t\}}^M$ be a reconstructing \mbox{rank-1} lattice for $J_{\{t\}} \coloneqq \mathcal P_{\{t\}}(\Gamma)$ with probability $1-\varepsilon$ and $\bm X_{\{t\}}\subset\bm X_{\{t\}}^M$ an i.i.d.\ uniformly drawn subset with 
        \begin{equation*}
            |\bm X_{\{t\}}| \ge 12 |J_{\{t\}}|
            \Big(
            \log|J_{\{t\}}|
            +\log\Big(\frac{2r}{\varepsilon}\Big)
            \Big) \,.
        \end{equation*}
        Further, let $\bm\Xi_{\{t\}} = \{\bm\xi^1, \dots, \bm\xi^r\} \subset\mathds T^{d-1}$ be drawn i.i.d.\ uniformly random.
        
        Using samples in $\bm X_{\{t\}}\times\bm\Xi_{\{t\}}$ for $r$ least squares approximations, we construct $I_{\{t\}}$ such that
        \begin{equation*}
            J_{\{t\}}
            \supset I_{\{t\}}
            \supset \mathcal P_{\{t\}}(I_\delta)
        \end{equation*}
        with probability exceeding $1-3\varepsilon$.
            
    \item
        Let $t = 2,\dots, t$ and $\bm X_{\{1,\dots, t\}}^M$ be a reconstructing \mbox{rank-1} lattice for 
        $J_{\{1,\dots,t\}} \coloneqq (I_{\{1,\dots,t-1\}}\times I_{\{t\}})\cap\mathcal P_{\{1,\dots,t\}}(\Gamma)$
        with probability $1-\varepsilon$ and $\bm X_{\{1,\dots,t\}}\subset\bm X_{\{1,\dots,t\}}^M$ an i.i.d.\ uniformly drawn subset with 
        \begin{equation*}
            |\bm X_{\{1,\dots,t\}}| \ge 12 |J_{\{1,\dots,t\}}|
            \Big(
            \log |J_{\{1,\dots,t\}}|
            +\log\Big(\frac{2r}{\varepsilon}\Big)
            \Big) \,.
        \end{equation*}
        Further, let $\bm\Xi_{\{1,\dots,t\}} = \{\bm\xi^1, \dots, \bm\xi^r\} \subset\mathds T^{d-t}$ be drawn i.i.d.\ uniformly random.
        
        Using samples in $\bm X_{\{1,\dots,t\}}\times\bm\Xi_{\{1,\dots,t\}}$ for $r$ least squares approximations, we construct $I_{\{1,\dots,t\}}$ such that
        \begin{equation*}
            J_{\{1,\dots,t\}}
            \supset I_{\{1,\dots,t\}}
            \supset \mathcal P_{\{1,\dots,t\}}(I_\delta)
        \end{equation*}
        with probability exceeding $1-3\varepsilon$.
    \end{enumerate}
    In particular, we have $I_{\{1,\dots, d\}} \supset I_\delta$ with probability exceeding $1-6d\varepsilon$.
\end{theorem}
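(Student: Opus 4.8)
The plan is to obtain Theorem~\ref{thm:overall} as an iterated application of Theorem~\ref{thm:dinc}, chained together by an induction on the incremental dimension and closed off by a union bound. Each of the $d$ detections in part~1 and each of the $d-1$ detections in part~2 is one instance of Theorem~\ref{thm:dinc}, applied with both of its frequency index sets taken equal to the candidate set of that step, so that the least squares is performed on the full candidate and the size condition $|I_{\{1,\dots,t\}}^M\setminus I_{\{1,\dots,t\}}|\le\tfrac92|I_{\{1,\dots,t\}}|$ holds trivially with $0$ on the left. For part~1 the single coordinate $\{t\}$ plays the role of the block $\{1,\dots,t\}$, which is legitimate since the whole construction is invariant under relabeling of the coordinates. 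In every step the constructed set is obtained by thresholding the candidate, for instance $I_{\{1,\dots,t\}}\coloneqq\{\bm k\in J_{\{1,\dots,t\}}:\max_{i=1,\dots,r}|\hat g_{\{1,\dots,t\},\bm k}(\bm\xi^i)|\ge\delta'\}$; the upper inclusion $I_{\{1,\dots,t\}}\subset J_{\{1,\dots,t\}}$ then holds by construction, while the lower inclusion $I_{\{1,\dots,t\}}\supset\mathcal P_{\{1,\dots,t\}}(I_\delta)$ is precisely the detection guarantee of Theorem~\ref{thm:dinc}.

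First I would treat part~1. Since $I_\delta\subset\Gamma$, the projections satisfy $\mathcal P_{\{t\}}(I_\delta)\subset\mathcal P_{\{t\}}(\Gamma)=J_{\{t\}}$, which is the only hypothesis of Theorem~\ref{thm:dinc} not already furnished by the stated sampling counts and reconstructing lattices. Applying Theorem~\ref{thm:dinc} coordinatewise for $t=1,\dots,d$ hence yields $J_{\{t\}}\supset I_{\{t\}}\supset\mathcal P_{\{t\}}(I_\delta)$, each with probability at least $1-3\varepsilon$.

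The core of the argument is part~2, which I would run as an induction on $t$. The induction hypothesis is that the sets produced in the earlier steps satisfy their inclusions, in particular $I_{\{1,\dots,t-1\}}\supset\mathcal P_{\{1,\dots,t-1\}}(I_\delta)$ together with $I_{\{t\}}\supset\mathcal P_{\{t\}}(I_\delta)$ from part~1. The key step is to verify the precondition $\mathcal P_{\{1,\dots,t\}}(I_\delta)\subset J_{\{1,\dots,t\}}$ of Theorem~\ref{thm:dinc}. Given $\bm k=(k_1,\dots,k_t)\in\mathcal P_{\{1,\dots,t\}}(I_\delta)$ there is $\bm l\in\mathds Z^{d-t}$ with $(\bm k,\bm l)\in I_\delta$; then $(k_1,\dots,k_{t-1})\in\mathcal P_{\{1,\dots,t-1\}}(I_\delta)\subset I_{\{1,\dots,t-1\}}$ and $k_t\in\mathcal P_{\{t\}}(I_\delta)\subset I_{\{t\}}$, so $\bm k\in I_{\{1,\dots,t-1\}}\times I_{\{t\}}$, while $\bm k\in\mathcal P_{\{1,\dots,t\}}(\Gamma)$ because $I_\delta\subset\Gamma$. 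Hence $\bm k\in J_{\{1,\dots,t\}}$, the precondition holds, and Theorem~\ref{thm:dinc} delivers $J_{\{1,\dots,t\}}\supset I_{\{1,\dots,t\}}\supset\mathcal P_{\{1,\dots,t\}}(I_\delta)$ with probability at least $1-3\varepsilon$, which advances the induction.

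Finally I would collect the probabilities. Part~1 contributes $d$ detections and part~2 contributes $d-1$, for $2d-1$ detections in total, each drawing its own reconstructing lattice, subsample, and anchors. Since the precondition checked above is deterministic once the preceding sets are fixed, the per-step guarantee of Theorem~\ref{thm:dinc} applies conditionally on the success of all earlier steps; writing the failure event as the disjoint union over the index of the first failing step and bounding each conditional failure probability by $3\varepsilon$ yields a total failure probability of at most $(2d-1)\cdot 3\varepsilon\le 6d\varepsilon$. On the complementary event all inclusions hold simultaneously, and in particular $I_{\{1,\dots,d\}}\supset\mathcal P_{\{1,\dots,d\}}(I_\delta)=I_\delta$ since $\mathcal P_{\{1,\dots,d\}}$ is the identity on $\mathds Z^d$. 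The main obstacle I anticipate is this probabilistic bookkeeping across dependent steps: each candidate set $J_{\{1,\dots,t\}}$ is itself random through the earlier detections, so one must argue carefully that Theorem~\ref{thm:dinc} may be invoked at step $t$ on the event that steps $1,\dots,t-1$ succeeded, rather than unconditionally.
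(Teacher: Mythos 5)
Your proposal is correct and follows exactly the route of the paper's (one-line) proof: iterate Theorem~\ref{thm:dinc} over the dimension-incremental steps, checking the inclusion precondition inductively, and combine the $2d-1$ per-step failure probabilities of $3\varepsilon$ by a union bound to get $1-6d\varepsilon$. Your added care about conditioning on the success of earlier (random) candidate sets is a worthwhile elaboration that the paper leaves implicit.
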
 

\begin{proof} The assertion follows from repeatedly applying Theorem~\ref{thm:dinc} and union bound.
\end{proof} %
 \section{Numerical experiments}\label{sec:numerics} 

We consider the $10$-dimensional test function $f\colon \mathds T^{10} \rightarrow \mathds R,$
\begin{equation*}
    f(\bm x) \coloneqq\!\!\! \prod_{t\in\{ 1,3,8 \}}\!\!\! N_2(x_t) +\!\!\! \prod_{t\in\{ 2,5,6,10 \}}\!\!\! N_4(x_t) +\!\!\! \prod_{t\in\{ 4,7,9 \}}\!\!\! N_6(x_t),
\end{equation*}
with $N_m\colon \mathds T\rightarrow\mathds R$ being the B-Spline of order $m\in\mathds N$
\begin{equation*}
    N_m(x)
    \coloneqq C_m \sum_{k\in\mathds Z} (-1)^k \sinc\Big(\frac{\pi k}{m} \Big)^m \exp(2\pi\mathrm i k x) \,,
\end{equation*}
with a constant $C_m>0$ such that $\| N_m \|_{L_2(\mathds T)} = 1$.
This function was already used to test high-dimensional algorithms in \cite{volkmerdiss, PV16, KPV21, KKV22, KPT22}.
It is not a sparse signal with respect to the trigonometric system, i.e., $|\hat f_{\bm k}| \neq 0$ for infinitely many $\bm k\in\mathds Z^{10}$.

Because of the product structure and the smoothness of the B-splines, we have $|\hat f_{\bm k}| \le C\prod_{t=1}^{d} \max\{1, k_t\}^{-2}$ for some constant $C>0$.
For this reason we choose our initial search space $\Gamma\subset\mathds Z^{10}$ to be a $10$-dimensional hyperbolic cross of radius $2^8$, i.e.,
\begin{equation*}
    \Gamma
    = \Big\{ \bm k \in \mathds Z^d: \prod_{t=1}^{10} \max \{ 1, |k_t| \} \leq 2^8 \Big\}
\end{equation*}
with $|\Gamma| = 8\,827\,703\,433$ possible frequencies to choose from.

In our algorithm we choose the number of detection iterations $r=5$ and the detection threshold to $\delta' = 10^{-12}$.
To limit the number of detected frequencies, we set the target sparsity to $s\in\{2^3, \dots, 2^{13}\}$ and only consider the $s_\mathrm{local} = \lceil 1.2 s\rceil$ largest approximated Fourier coefficients in each dimension-incremental step.
We then used three different sampling strategies for the dimension-incremental steps:
\begin{itemize}
    \item full \mbox{rank-1} lattices utilizing fast Fourier algorithms as in \cite{PV16};
    \item i.i.d.\ uniformly random points utilizing good sampling complexity as in \cite{KPT22};
    \item subsampled \mbox{rank-1} lattices combining both advantages
\end{itemize}
and obtained a frequency set $I\subset\mathds Z^{10}$ and Fourier coefficients $\bm{\hat g} = (\hat g_{\bm k})_{\bm k\in I}$ of our approximation.
All tests were performed $10$ times in MATLAB\textsuperscript{\textregistered} using $2$ six core CPUs Intel\textsuperscript{\textregistered} Xeon\textsuperscript{\textregistered} CPU E5-2620 v3 @ 2.40GHz and 64 GB RAM.
We stopped computations which exeeded a time limit of $1$ hour.

\begin{figure} \centering
    \begingroup
  \makeatletter
  \providecommand\color[2][]{\GenericError{(gnuplot) \space\space\space\@spaces}{Package color not loaded in conjunction with
      terminal option `colourtext'}{See the gnuplot documentation for explanation.}{Either use 'blacktext' in gnuplot or load the package
      color.sty in LaTeX.}\renewcommand\color[2][]{}}\providecommand\includegraphics[2][]{\GenericError{(gnuplot) \space\space\space\@spaces}{Package graphicx or graphics not loaded}{See the gnuplot documentation for explanation.}{The gnuplot epslatex terminal needs graphicx.sty or graphics.sty.}\renewcommand\includegraphics[2][]{}}\providecommand\rotatebox[2]{#2}\@ifundefined{ifGPcolor}{\newif\ifGPcolor
    \GPcolortrue
  }{}\@ifundefined{ifGPblacktext}{\newif\ifGPblacktext
    \GPblacktexttrue
  }{}\let\gplgaddtomacro\g@addto@macro
\gdef\gplbacktext{}\gdef\gplfronttext{}\makeatother
  \ifGPblacktext
\def\colorrgb#1{}\def\colorgray#1{}\else
\ifGPcolor
      \def\colorrgb#1{\color[rgb]{#1}}\def\colorgray#1{\color[gray]{#1}}\expandafter\def\csname LTw\endcsname{\color{white}}\expandafter\def\csname LTb\endcsname{\color{black}}\expandafter\def\csname LTa\endcsname{\color{black}}\expandafter\def\csname LT0\endcsname{\color[rgb]{1,0,0}}\expandafter\def\csname LT1\endcsname{\color[rgb]{0,1,0}}\expandafter\def\csname LT2\endcsname{\color[rgb]{0,0,1}}\expandafter\def\csname LT3\endcsname{\color[rgb]{1,0,1}}\expandafter\def\csname LT4\endcsname{\color[rgb]{0,1,1}}\expandafter\def\csname LT5\endcsname{\color[rgb]{1,1,0}}\expandafter\def\csname LT6\endcsname{\color[rgb]{0,0,0}}\expandafter\def\csname LT7\endcsname{\color[rgb]{1,0.3,0}}\expandafter\def\csname LT8\endcsname{\color[rgb]{0.5,0.5,0.5}}\else
\def\colorrgb#1{\color{black}}\def\colorgray#1{\color[gray]{#1}}\expandafter\def\csname LTw\endcsname{\color{white}}\expandafter\def\csname LTb\endcsname{\color{black}}\expandafter\def\csname LTa\endcsname{\color{black}}\expandafter\def\csname LT0\endcsname{\color{black}}\expandafter\def\csname LT1\endcsname{\color{black}}\expandafter\def\csname LT2\endcsname{\color{black}}\expandafter\def\csname LT3\endcsname{\color{black}}\expandafter\def\csname LT4\endcsname{\color{black}}\expandafter\def\csname LT5\endcsname{\color{black}}\expandafter\def\csname LT6\endcsname{\color{black}}\expandafter\def\csname LT7\endcsname{\color{black}}\expandafter\def\csname LT8\endcsname{\color{black}}\fi
  \fi
    \setlength{\unitlength}{0.0500bp}\ifx\gptboxheight\undefined \newlength{\gptboxheight}\newlength{\gptboxwidth}\newsavebox{\gptboxtext}\fi \setlength{\fboxrule}{0.5pt}\setlength{\fboxsep}{1pt}\definecolor{tbcol}{rgb}{1,1,1}\begin{picture}(4700.00,4120.00)\gplgaddtomacro\gplbacktext{\csname LTb\endcsname \put(560,2562){\makebox(0,0)[r]{\strut{}$10^{-4}$}}\csname LTb\endcsname \put(560,3177){\makebox(0,0)[r]{\strut{}$10^{-2}$}}\csname LTb\endcsname \put(560,3792){\makebox(0,0)[r]{\strut{}$10^{0}$}}\csname LTb\endcsname \put(851,2357){\makebox(0,0){\strut{}$2^{4}$}}\csname LTb\endcsname \put(1454,2357){\makebox(0,0){\strut{}$2^{8}$}}\csname LTb\endcsname \put(2056,2357){\makebox(0,0){\strut{}$2^{12}$}}\csname LTb\endcsname \put(1450,3977){\makebox(0,0){\strut{}realative $L_2$-error}}\csname LTb\endcsname \put(1450,2192){\makebox(0,0){\strut{}$s$}}}\gplgaddtomacro\gplfronttext{}\gplgaddtomacro\gplbacktext{\csname LTb\endcsname \put(2900,2562){\makebox(0,0)[r]{\strut{}$10^{-7}$}}\csname LTb\endcsname \put(2900,3089){\makebox(0,0)[r]{\strut{}$10^{-4}$}}\csname LTb\endcsname \put(2900,3616){\makebox(0,0)[r]{\strut{}$10^{-1}$}}\csname LTb\endcsname \put(3191,2357){\makebox(0,0){\strut{}$2^{4}$}}\csname LTb\endcsname \put(3794,2357){\makebox(0,0){\strut{}$2^{8}$}}\csname LTb\endcsname \put(4396,2357){\makebox(0,0){\strut{}$2^{12}$}}\csname LTb\endcsname \put(3790,3977){\makebox(0,0){\strut{}$\|\bm{\hat f}-\bm{\hat g}\|_{\infty}$}}\csname LTb\endcsname \put(3790,2192){\makebox(0,0){\strut{}$s$}}}\gplgaddtomacro\gplfronttext{}\gplgaddtomacro\gplbacktext{\csname LTb\endcsname \put(560,512){\makebox(0,0)[r]{\strut{}$10^{4}$}}\csname LTb\endcsname \put(560,1127){\makebox(0,0)[r]{\strut{}$10^{7}$}}\csname LTb\endcsname \put(560,1742){\makebox(0,0)[r]{\strut{}$10^{10}$}}\csname LTb\endcsname \put(851,307){\makebox(0,0){\strut{}$2^{4}$}}\csname LTb\endcsname \put(1454,307){\makebox(0,0){\strut{}$2^{8}$}}\csname LTb\endcsname \put(2056,307){\makebox(0,0){\strut{}$2^{12}$}}\csname LTb\endcsname \put(1450,1927){\makebox(0,0){\strut{}number of samples}}\csname LTb\endcsname \put(1450,142){\makebox(0,0){\strut{}$s$}}}\gplgaddtomacro\gplfronttext{}\gplgaddtomacro\gplbacktext{\csname LTb\endcsname \put(2900,512){\makebox(0,0)[r]{\strut{}$10^{0}$}}\csname LTb\endcsname \put(2900,1127){\makebox(0,0)[r]{\strut{}$10^{2}$}}\csname LTb\endcsname \put(2900,1742){\makebox(0,0)[r]{\strut{}$10^{4}$}}\csname LTb\endcsname \put(3191,307){\makebox(0,0){\strut{}$2^{4}$}}\csname LTb\endcsname \put(3794,307){\makebox(0,0){\strut{}$2^{8}$}}\csname LTb\endcsname \put(4396,307){\makebox(0,0){\strut{}$2^{12}$}}\csname LTb\endcsname \put(3790,1927){\makebox(0,0){\strut{}computation time (s)}}\csname LTb\endcsname \put(3790,142){\makebox(0,0){\strut{}$s$}}}\gplgaddtomacro\gplfronttext{}\gplbacktext
    \put(0,0){\includegraphics[width={235.00bp},height={206.00bp}]{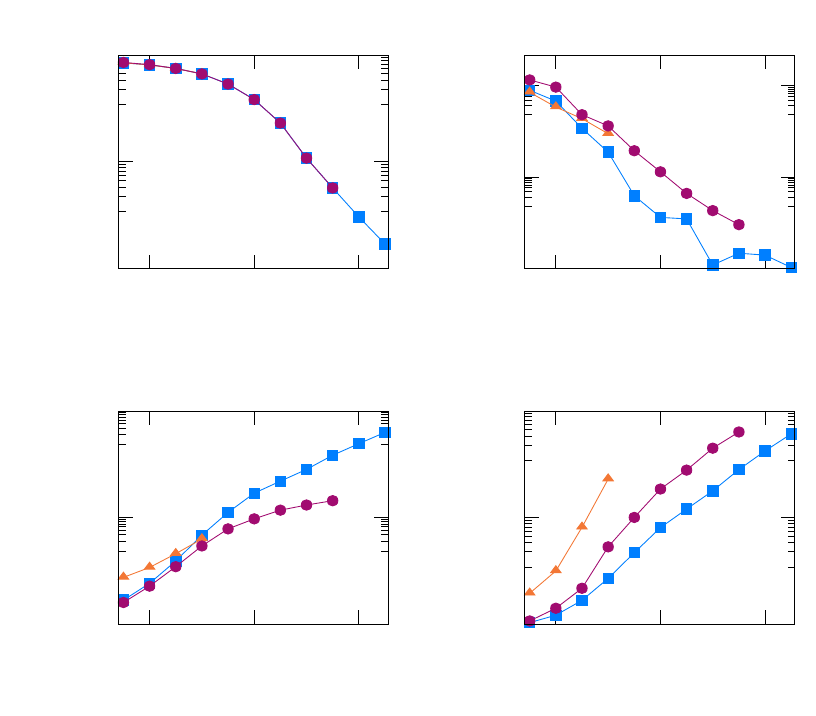}}\gplfronttext
  \end{picture}\endgroup
     \caption{
        Approximation results for the $10$-dimensional test function for
        \textcolor{azure}{azure:} full \mbox{rank-1} lattices,
        \textcolor{orange}{orange:} i.i.d.\ uniformly random points, and
        \textcolor{violet}{violet:} subsampled \mbox{rank-1} lattices.
    }\label{fig:test10d}
\end{figure} 

In Figure~\ref{fig:test10d} we depicted the medians of various quantities of the experiment.
The two upper graphs show that the relative $L_2$-error as well as the error in the coefficients behaves the same with all three approaches.
Note, that we did not need as many detection iterations as we proposed in the theory but $r=5$ was sufficient.
Next, we investigate the sampling complexity and computation time.

\textbf{Sampling complexity.}
As discussed in the theoretical part, the reconstructing requirement of the \mbox{rank-1} lattice blows up its size resulting in the most used sampling points.
Because of computational infeasibility, we do not have many computations with the i.i.d.\ uniformly random points and cannot capture its behaviour (in our experience it should behave similar to the subsampled \mbox{rank-1} lattice).
The subsampled \mbox{rank-1} lattices have better sampling complexity than the full \mbox{rank-1} lattices and the graph suggests this advantage will increase for higher sparsity $s$.

\textbf{Computation time.}
The fastest computation time can be seen with the full \mbox{rank-1} lattices since the approximations only need one matrix-vector product each for which fast Fourier algorithms are utilized.
The subsampled \mbox{rank-1} lattices are slower by a constant factor of $10$.
Here, the same fast Fourier algorithm is used but the approximations use an iterative solver.
We capped the maximal number of iterations by $10$, which explains the constant factor.
For the i.i.d.\ uniformly random points no fast Fourier algorithms are available making it the slowest approach.

The experiments confirms our theoretical findings of subsampled \mbox{rank-1} lattices combining the computational and sampling advantages of full \mbox{rank-1} lattices and random points, respectively.

\bibliographystyle{amsplain}

\begin{thebibliography}{10}

\bibitem{Bar22}
F.~Bartel, \emph{Stability and error guarantees for least squares approximation
  with noisy samples}, arXiv, 2022.

\bibitem{BKPU22}
F.~Bartel, L.~Kämmerer, D.~Potts, and T.~Ullrich, \emph{On the reconstruction
  of functions from values at subsampled quadrature points}, arXiv, 2022.

\bibitem{ByKaUlVo16}
G.~{B}yrenheid, L.~K\"{a}mmerer, T.~{U}llrich, and T.~{V}olkmer, \emph{Tight
  error bounds for rank-1 lattice sampling in spaces of hybrid mixed
  smoothness}, Numer. Math. \textbf{136} (2017), 993--1034.

\bibitem{CDL13}
A.~Cohen, M.~A. Davenport, and D.~Leviatan, \emph{On the stability and accuracy
  of least squares approximations}, Found. Comput. Math. \textbf{13} (2013),
  no.~5, 819--834.

\bibitem{CM17}
A.~Cohen and G.~Migliorati, \emph{Optimal weighted least-squares methods}, SMAI
  J. Comput. Math. \textbf{3} (2017), 181--203.

\bibitem{DTU18}
D.~D\~{u}ng, V.~Temlyakov, and T.~Ullrich, \emph{Hyperbolic cross
  approximation}, Advanced Courses in Mathematics. CRM Barcelona,
  Birkh\"{a}user/Springer, Cham, 2018.

\bibitem{DKP22}
J.~Dick, P.~Kritzer, and F.~Pillichshammer, \emph{Lattice rules---numerical
  integration, approximation, and discrepancy}, Springer Series in
  Computational Mathematics, vol.~58, Springer, Cham, 2022.

\bibitem{DC22}
M.~Dolbeault and A.~Cohen, \emph{Optimal sampling and {C}hristoffel functions
  on general domains}, Constr. Approx. \textbf{56} (2022), no.~1, 121--163.

\bibitem{FR13}
S.~Foucart and H.~Rauhut, \emph{A mathematical introduction to compressive
  sensing}, Applied and Numerical Harmonic Analysis, Birkh\"{a}user/Springer,
  New York, 2013.

\bibitem{GIIS14}
A.~C. Gilbert, P.~Indyk, M.~Iwen, and L.~Schmidt, \emph{Recent developments in
  the sparse fourier transform: A compressed fourier transform for big data},
  IEEE Signal Processing Magazine \textbf{31} (2014), no.~5, 91--100.

\bibitem{GI23}
C.~Gross and M.~Iwen, \emph{Sparse spectral methods for solving
  high-dimensional and multiscale elliptic pdes}, arXiv, 2023.

\bibitem{kaemmererdiss}
L.~K\"ammerer, \emph{{High Dimensional Fast Fourier Transform Based on Rank-1
  Lattice Sampling}}, Dissertation, Universit\"atsverlag Chemnitz, 2014.

\bibitem{KaPoVo13}
L.~K\"ammerer, D.~Potts, and T.~Volkmer, \emph{Approximation of multivariate
  periodic functions by trigonometric polynomials based on rank-1 lattice
  sampling}, J. Complexity \textbf{31} (2015), 543--576.

\bibitem{KPV21}
L.~K\"{a}mmerer, D.~Potts, and T.~Volkmer, \emph{High-dimensional sparse {FFT}
  based on sampling along multiple rank-1 lattices}, Appl. Comput. Harmon.
  Anal. \textbf{51} (2021), 225--257.

\bibitem{KU21}
D.~Krieg and M.~Ullrich, \emph{Function values are enough for
  {$L_2$}-approximation}, Found. Comput. Math. \textbf{21} (2021), no.~4,
  1141--1151.

\bibitem{KKV22}
L.~Kämmerer, F.~Krahmer, and T.~Volkmer, \emph{A sample efficient sparse {FFT}
  for arbitrary frequency candidate sets in high dimensions}, Numer. Algorithms
  \textbf{89} (2022), 1479--1520.

\bibitem{KPT22}
L.~Kämmerer, D.~Potts, and F.~Taubert, \emph{Nonlinear approximation in
  bounded orthonormal product bases}, arXiv, 2022.

\bibitem{LPU21}
L.~Lippert, D.~Potts, and T.~Ullrich, \emph{Fast hyperbolic wavelet regression
  meets anova}, arXiv, 2021.

\bibitem{Love77}
M.~Lo{\`{e}}ve, \emph{{Probability Theory I}}, Springer New York, 1977.

\bibitem{nuyensdiss07}
D.~Nuyens, \emph{Fast construction of good lattice rules}, Dissertation,
  Katholieke Universiteit Leuven, 2007.

\bibitem{PlPoStTa18}
G.~Plonka, D.~Potts, G.~Steidl, and M.~Tasche, \emph{Numerical fourier
  analysis}, Applied and Numerical Harmonic Analysis, Birkh\"auser, 2018.

\bibitem{PV16}
D.~Potts and T.~Volkmer, \emph{Sparse high-dimensional {FFT} based on rank-1
  lattice sampling}, Appl. Comput. Harmon. Anal. \textbf{41} (2016), no.~3,
  713--748.

\bibitem{Sloan94}
I.~H. Sloan and S.~Joe, \emph{Lattice methods for multiple integration}, Oxford
  Science Publications, The Clarendon Press, Oxford University Press, New York,
  1994.

\bibitem{SC08}
I.~Steinwart and A.~Christmann, \emph{Support vector machines}, Information
  Science and Statistics, Springer, New York, 2008.

\bibitem{Tem11}
V.~N. Temlyakov, \emph{Greedy approximation}, vol.~20, Cambridge University
  Press, 2011.

\bibitem{volkmerdiss}
T.~Volkmer, \emph{{Multivariate Approximation and High-Dimensional Sparse FFT
  Based on Rank-1 Lattice Sampling}}, Dissertation, Universit\"atsverlag
  Chemnitz, 2017.

\end{thebibliography}
\providecommand{\bysame}{\leavevmode\hbox to3em{\hrulefill}\thinspace}
\providecommand{\MR}{\relax\ifhmode\unskip\space\fi MR }
\providecommand{\MRhref}[2]{\href{http://www.ams.org/mathscinet-getitem?mr=#1}{#2}
}
\providecommand{\href}[2]{#2}

\newpage
\onecolumn
\appendix
\section{Proofs} 

\begin{proof}[Proof of Theorem~II.2] The proof follows ideas from \cite[Theorem~3.20]{LPU21} or \cite[Theorem~3.2]{Bar22} and utilizes the discrete subsampling from \cite{BKPU22}.
    By \cite[Lemma~2.2 and Theorem~3.1]{BKPU22} we obtain
    \begin{equation}\label{eq:ldskjf}
        \frac{n}{2}
        \le \sigma_{\min}^2 (\bm L)
        = \|(\bm L^\ast\bm L)\inv\bm L^\ast\|_{2\to 2}^2
    \end{equation}
    with probability $1-\exp(-t)$.
    We split the approximation error as follows
    \begin{align*}
        \| f - S_{\bm X} \bm f \|_{L_2}^{2}
        &= \|f-P_I f\|_{L_2}^2
        + \| P_I f - S_{\bm X} \bm f \|_{ L_2}^{2} \\
        &\le \|f-P_I f\|_{L_2}^2
        + 2\| P_I f - S_{\bm X} P_{I_M} \bm f \|_{ L_2}^{2}
        + 2\| S_{\bm X} P_{I_M} f - S_{\bm X} \bm f \|_{ L_2}^{2} \,.
    \end{align*}
    By event \eqref{eq:ldskjf} and the invariance of $S_{\bm X}$ to functions supported on $I$, we obtain
    \begin{align}
        \| f - S_{\bm X} \bm f \|_{L_2}^{2}
        &\le \|f-P_I f\|_{L_2}^2
        + \frac{4}{n} \sum_{i\in J} |(P_I f - P_{I_M} f)(\bm x^i)|^2
        + \frac{4}{n} \sum_{i\in J} |(f - P_{I_M} f)(\bm x^i)|^2 \nonumber \\
        &\le \|f-P_I f\|_{L_2}^2
        + \frac{4}{n} \sum_{i\in J} |(P_I f - P_{I_M} f)(\bm x^i)|^2
        + 4 \|f - P_{I_M} f\|_\infty^2 \,. \label{eq:lkjfs}
    \end{align}
    
    We will estimate the middle summand by Bernstein's inequality, cf.\ \cite[Corollary 7.31]{FR13} or \cite[Theorem 6.12]{SC08}.
    We define random variables $\xi_i = |(P_I f-P_{I_M} f)(\bm x^i)|^2 - \|P_{I_M}f-P_If\|_{L_2}^2$.
    By the reconstructing property, we have $\frac{1}{M} \sum_{j=1}^{M} |(P_I f-P_{I_M}f)(\bm x^i)|^2 = \|P_{I_M}f-P_If\|_{L_2}^2$ and, thus, $\xi_i$ are mean zero.
    Further, we have
    \begin{align*}
        \|\xi_i\|_\infty
        \le (\|P_If-P_{I_M} f\|_{\infty} + \|P_If-P_{I_M} f\|_{L_2})^2
    \end{align*}
    and
    \begin{align*}
        \mathds E( \xi_i^2 )
        &= \mathds E\Big( \Big( |(P_If-P_{I_M}f)(\bm x^i)|^2 \Big)^2\Big) + \|P_If-P_{I_M} f\|_{L_2}^4 \\
        &\le \|P_If-P_{I_M} f\|_{\infty}^2 \sum_{i=1}^{M} |(P_If-P_{I_M}f)(\bm x^i)|^2 + \|P_If-P_{I_M} f\|_{L_2}^4 \\
        &\le \|P_If-P_{I_M} f\|_{L_2}^2 (\|P_If-P_{I_M} f\|_{\infty} + \|P_If-P_{I_M} f\|_{L_2} )^2 \,.
    \end{align*}
    Applying Bernstein's inequality yields
    \begin{align*}
        &\frac{1}{n} \sum_{i\in J}|(P_If-P_{I_M}f)(\bm x^i)|^2 - \|P_If-P_{I_M}f\|_{L_2}^2
        \le \frac{2t}{3n} (\|P_If-P_{I_M} f\|_{\infty} + \|P_If-P_{I_M} f\|_{L_2})^2 \\
        &\quad + \sqrt{\frac{2t}{n}} \|P_If-P_{I_M} f\|_{L_2} (\|P_If-P_{I_M} f\|_{\infty} + \|P_If-P_{I_M} f\|_{L_2} )
    \end{align*}
    with probability $1-\exp(-t)$.
    
    Plugging this in \eqref{eq:lkjfs} and using $\|P_{I_M}f-P_If\|_{L_2}\le\|f-P_If\|_{L_2}$, we obtain
    \begin{align*}
        \| f - S_{\bm X} \bm f \|_{L_2}^{2}
        &\le 5 \|f-P_I f\|_{L_2}^2
        + 2\sqrt{\frac{8t}{n}} \|f-P_If\|_{L_2}\Big(\|P_{I_M}f-P_If\|_{\infty} + \|f-P_If\|_{L_2}\Big) \\
        &\quad + \frac{8t}{3n} \Big(\|P_{I_M}f-P_If\|_{\infty} + \|f-P_If\|_{L_2}\Big)^2
        + 4\|f-P_{I_M}f\|_{\infty}^2 \\
        &\le \Big( \sqrt{5} \|f-P_I f\|_{L_2}
        +\sqrt{\frac{8t}{3n}} \Big(\|P_{I_M}f-P_If\|_{\infty} + \|f-P_If\|_{L_2}\Big) \Big)^2 \\
        &\quad+ 4\|f-P_{I_M}f\|_{\infty}^2 \,.
    \end{align*}
    Using the assumption on $n$, we have $8t/3n \le 2/9|I| \le 2/9$ and further estimate
    \begin{align*}
        \| f - S_{\bm X} \bm f \|_{L_2}^{2}
        &\le \Big( 3 \|f-P_I f\|_{L_2}
        +\sqrt{\frac{2}{9|I|}} \|P_{I_M}f-P_If\|_{\infty}\Big)^2
        + 4\|f-P_{I_M}f\|_{\infty}^2 \,.
    \end{align*}
    The second part of the bound is obtained by applying Hölder's inequality:
    \begin{align*}
        \|P_{I_M}f-P_If\|_{\infty}^2
        &= \Big\| \sum_{\bm k\in I_M\setminus I} \hat f_{\bm k} \exp(2\pi\mathrm i\langle\bm k,\cdot\rangle) \Big\|_{\infty}^2 \\
        &\le \Big\| \sum_{\bm k\in I_M\setminus I} |\exp(2\pi\mathrm i\langle\bm k,\cdot\rangle)|^2 \Big\|_{\infty}
        \sum_{\bm k\in I_M\setminus I} |\hat f_{\bm k}|^2 \\
        &= |I_M\setminus I|\|P_{I_M}f-P_If\|_{L_2}^2
        \le |I_M\setminus I|\|f-P_If\|_{L_2}^2 \,.
    \end{align*}
    By union bound we obtain the overall probability.
\end{proof} 

\begin{lemma}\label{squared} For $X$ uniformly distributed on $\mathds T^d$, we have
    \begin{equation*}
        \mathds P( |g(X)| \ge \delta')
        \ge \frac{\|g\|_{L_2}^2-\delta'^2}{\|g\|_{\infty}^2} \,.
    \end{equation*}
\end{lemma}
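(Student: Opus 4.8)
The plan is to prove this as a one-sided second-moment (Paley--Zygmund--type) estimate, splitting the expectation of $|g(X)|^2$ according to whether the threshold $\delta'$ is met. Since $X$ is uniform on $\mathds T^d$, we have the exact identity $\mathds E\,|g(X)|^2 = \int_{\mathds T^d} |g(x)|^2\,\mathrm d x = \|g\|_{L_2}^2$, so everything we need is encoded in a single second moment.

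First I would introduce the event $A = \{x\in\mathds T^d : |g(x)|\ge\delta'\}$ and write
\[
\|g\|_{L_2}^2 = \mathds E\,|g(X)|^2 = \mathds E\big(|g(X)|^2\mathds 1_A\big) + \mathds E\big(|g(X)|^2\mathds 1_{A^c}\big) \,.
\]
On $A$ I would bound the integrand by its (essential) supremum, giving $\mathds E(|g(X)|^2\mathds 1_A)\le\|g\|_\infty^2\,\mathds P(A)$; on the complement $A^c$ the integrand is strictly below $\delta'^2$, so $\mathds E(|g(X)|^2\mathds 1_{A^c})\le\delta'^2\,\mathds P(A^c)\le\delta'^2$. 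Combining the two estimates yields $\|g\|_{L_2}^2\le\|g\|_\infty^2\,\mathds P(A)+\delta'^2$, and rearranging gives the claim $\mathds P(|g(X)|\ge\delta')\ge(\|g\|_{L_2}^2-\delta'^2)/\|g\|_\infty^2$.

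There is no real obstacle here; the argument is a two-line truncation. The only points requiring minor care are that $\|g\|_\infty$ is understood as the essential supremum, so that $|g(X)|\le\|g\|_\infty$ holds almost surely and the bound on $A$ is legitimate, and that we discard $\mathds P(A^c)\le 1$ rather than keeping it, which is precisely what produces $\|g\|_\infty^2$ in the denominator of the final quotient. I would also remark that the bound is only informative when $\|g\|_{L_2}>\delta'$, since otherwise the right-hand side is non-positive and the statement holds trivially.
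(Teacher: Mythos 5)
Your proof is correct, and it is in substance the same argument as the paper's: the paper simply cites the general moment inequality from Lo\`eve (Par.~9.3.A) with $h(t)=t^2$, whose standard proof is exactly the truncation $\|g\|_{L_2}^2=\mathds E(|g(X)|^2\mathds 1_A)+\mathds E(|g(X)|^2\mathds 1_{A^c})\le\|g\|_\infty^2\,\mathds P(A)+\delta'^2$ that you write out explicitly. Your remarks about the essential supremum and about the bound being vacuous when $\|g\|_{L_2}\le\delta'$ are correct but not needed beyond what you already do.
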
 

\begin{proof} Consequence of \cite[Par.~9.3.A]{Love77} for $h(t) = t^2$.
\end{proof} 

\begin{proof}[Proof of Theorem~III.1] From Lemma~\ref{squared} follows
    \begin{equation*}
        \mathds P ( |\hat f_{\{1,\dots, t\}, \bm k}(\bm\xi) | \ge \delta' )
        \ge \frac{\|\hat f_{\{1,\dots, t\}, \bm k}\|_{L_2}^2 - \delta'^2}{\|\hat f_{\{1,\dots, t\}, \bm k}\|_{\infty}^2}
        \eqqcolon p_{\bm k} \,.
    \end{equation*}
    Let $\overline p = \max_{\bm k\in\mathcal P_{\{1,\dots, t\}}(I_\delta)} p_{\bm k}$.
    By union bound, we have the successful detection with probability exceeding $1-|I_\delta|(1-\overline p)^r$.
    This is smaller or equal $1-\varepsilon$ whenever
    \begin{equation}\label{eq:asdpojoh}
        r
        \le -\frac{\log |I_\delta| + \log(1/\varepsilon)}{\log(1-\overline p)} \,.
    \end{equation}
    Since $p_{\bm k}\in(0,1)$, we obtain
    \begin{equation*}
        -\frac{1}{\log(1-p_{\bm k})}
        \le \frac{1}{p_{\bm k}}
        = \frac{\|\hat f_{\{1,\dots, t\}, \bm k}\|_{\infty}^{2}}{\|\hat f_{\{1,\dots, t\}, \bm k}\|_{L_2}^{2} - \delta'^2}
        \le \frac{
        ( \sum_{(\bm k, \bm l)\in\mathds Z^{d-t}} |\hat f_{(\bm k, \bm l)} | )^2
        }{
        \sum_{(\bm k, \bm l)\in\mathds Z^{d-t}} |\hat f_{(\bm k, \bm l)} |^2
        - \delta'^2} \,.
    \end{equation*}
    With $\bm k\in\mathcal P_{\{1,\dots,t\}}(I_\delta)$ we have
    \begin{equation*}
        \delta'^2
        \le \delta^2/2
        \le \frac{1}{2} \sum_{(\bm k, \bm l)\in\mathds Z^{d-t}} |\hat f_{(\bm k, \bm l)} |^2 \,.
    \end{equation*}
    Consequently,
    \begin{align*}
        &-\frac{1}{\log(1-p_{\bm k})}
        \le 2 \frac{
        ( \sum_{(\bm k, \bm l)\in\mathds Z^{d-t}} |\hat f_{(\bm k, \bm l)} | )^2
        }{
        \sum_{(\bm k, \bm l)\in\mathds Z^{d-t}} |\hat f_{(\bm k, \bm l)} |^2
        } \\
        &\quad \le 4 \frac{
        ( \sum_{(\bm k, \bm l)\in\mathcal P_{\{1, \dots, t\}, \bm k}(I_\delta) } |\hat f_{(\bm k, \bm l)} | )^2
        }{
        \sum_{(\bm k, \bm l)\in\mathds Z^{d-t}} |\hat f_{(\bm k, \bm l)} |^2
        }
        + 4 \frac{
        ( \sum_{(\bm k, \bm l)\notin\mathcal P_{\{1, \dots, t\}, \bm k}(I_\delta) } |\hat f_{(\bm k, \bm l)} | )^2
        }{
        \sum_{(\bm k, \bm l)\in\mathds Z^{d-t}} |\hat f_{(\bm k, \bm l)} |^2
        } \\
        &\quad \le 4 |\mathcal P_{\{1,\dots,t\}, \bm k}(I_\delta)|
        + \frac{4}{\delta^2}
        \Big( \sum_{(\bm k, \bm l)\notin\mathcal P_{\{1, \dots, t\}, \bm k}(I_\delta) } |\hat f_{(\bm k, \bm l)} | \Big)^2 \\
        &\quad \le 4 |I_{\delta}|
        + \frac{4}{\delta^2} \Big( \sum_{\bm l\notin I_\delta } |\hat f_{\bm l} | \Big)^2 \,.
    \end{align*}
    Plugging this into \eqref{eq:asdpojoh} we obtain the assertion.
\end{proof} 

\begin{proof}[Proof of Theorem~III.2] By Theorem~III.1 we have
    \begin{align*}
        \frac{\delta}{\sqrt{2}}
        &\le \max_{i=1,\dots, r} |\hat f_{\{1,\dots, t\},\bm k}(\bm\xi^i) | \\
        &\le \max_{i=1,\dots, r} |\hat f_{\{1,\dots, t\},\bm k}(\bm\xi^i)
        - \hat g_{\{1,\dots, t\},\bm k}(\bm\xi^i) |
        + |\hat g_{\{1,\dots, t\},\bm k}(\bm\xi^i) | \,.
    \end{align*}
    Next, we show a bound on the above difference in order to obtain the desired threshold.
    We have
    \begin{align*}
        |\hat f_{\{1,\dots, t\},\bm k}(\bm\xi^i) - \hat g_{\{1,\dots, t\},\bm k}(\bm\xi^i) |^2
        &\le \sum_{k\in I_{\{1,\dots, t\}}} | \hat f_{\{1,\dots, t\},\bm k}(\bm\xi^i) - \hat g_{\{1,\dots, t\},\bm k}(\bm\xi^i) |^2 \\
        &\le \| f(\cdot, \bm\xi^i) - S_{\bm X}f(\cdot,\bm\xi^i\|_{L_2}^2 \,.
    \end{align*}
    Using Theorem~2.2 and the assumption on the carnality of the frequency index sets, we obtain
    \begin{align*}
        &|\hat f_{\{1,\dots, t\},\bm k}(\bm\xi^i) - \hat g_{\{1,\dots, t\},\bm k}(\bm\xi^i) |^2 \\
        &\quad\le \Big(3+\sqrt{\frac{2|I_{\{1,\dots,t\}}^{M} \setminus I_{\{1,\dots,t\}}|}{9|I_{\{1,\dots,t\}}|}} \Big)^2
        \|f(\cdot, \bm\xi^i) - P_{I_{\{1,\dots,t\}}} f(\cdot, \bm\xi^i) \|_{L_2}^2 \\
        &\qquad + 4 \|f(\cdot, \bm\xi^i) - P_{I_{\{1,\dots,t\}}^M} f(\cdot, \bm\xi^i) \|_{\infty}^2 \\
        &\quad\le 16 \|f(\cdot, \bm\xi^i) - P_{I_{\{1,\dots,t\}}} f(\cdot, \bm\xi^i) \|_{L_2}^2
        + 4 \|f(\cdot, \bm\xi^i) - P_{I_{\{1,\dots,t\}}^M} f(\cdot, \bm\xi^i) \|_{\infty}^2
    \end{align*}
    with probability
    \begin{equation*}
        1-2r\exp(-t)
        \quad\text{and}\quad
        |\bm X| \ge 12 |I_{\{1,\dots,t\}}|(\log|I_{\{1,\dots,t\}}|+t) \,,
    \end{equation*}
    where union bound was used over the detection iterations.
    Setting $t=\log(2r/\varepsilon)$ we obtain the assertion.
\end{proof}  
\end{document}